\newtheorem{theorem}{Theorem}[section]
\newtheorem{definition}[theorem]{Definition}
\newtheorem{lemma}[theorem]{Lemma}
\newtheorem{corollary}[theorem]{Corollary}
\begin{document}
\textwidth 150mm \textheight 225mm
\title{On the distance spectral radius, fractional matching and factors of graphs with given minimum degree
\thanks{Supported by the National Natural Science Foundation of China (Nos. 12001434 and 12271439) and the Natural Science Basic Research Program of Shaanxi Province (No. 2022JM-006 and 2024JC-YBQN-0015).}}
\author{Zengzhao Xu$^{a}$, Weige Xi$^{a}$\footnote{Corresponding author.}, Ligong Wang$^{b}$\\
{\small $^{a}$ College of Science, Northwest A\&F University, Yangling, Shaanxi 712100, P.R. China}\\
{\small $^{b}$ School of Mathematics and Statistics, Northwestern Polytechnical University,}\\
{\small  Xi'an, Shaanxi 710129, P.R. China}\\
{\small E-mail: xuzz0130@163.com; xiyanxwg@163.com; lgwangmath@163.com}\\}

\date{}
\maketitle
\begin{center}
\begin{minipage}{120mm}
\vskip 0.3cm
\begin{center}
{\small {\bf Abstract}}
\end{center}
{\small  A fractional matching of $G$ is a function $f: E(G)\to [0,1]$ such that $\sum_{e\in E_G(v_i)}f(e)\le 1$ for any $v_i\in V(G)$, where $E_G(v_i)=\{e: e\in E(G) \ \textrm{and}\  e \ \textrm{is incident with} \ v_i\}$. Let $\alpha_f(G)$ denote the fractional matching number of $G$, which is defined as $\alpha_f(G)=\max\{\sum_{e\in E(G)}f(e): f\ \textrm{is a fractional matching of} \ G\}$.
Let $\{G_1,G_2,G_3,\dots\}$ be a set of graphs, a $\{G_1,G_2,G_3,\dots\}$-factor of a graph $G$ is a spanning subgraph of $G$ such that each component of which is isomorphic to one of $\{G_1,G_2,G_3,\dots\}$. In this paper, we first establish a sharp upper bound for the distance spectral radius to guarantee that $\alpha_f(G)>\frac{n-k}{2}$ in a graph $G$ of order $n$ with given minimum degree, where $0<k<n$ is an integer. Then we give a sharp upper bound on the distance spectral radius of a graph $G$ with given minimum degree $\delta$ to ensure that $G$ has a $\{K_2, \{C_k\}\}$-factor, where $3\le k<+\infty$ is an integer. Moreover, we obtain a sharp upper bound on the distance spectral radius for the existence of a $\{K_{1,1},K_{1,2},\dots,K_{1,k}\}$-factor with $2\le k<+\infty$ in a graph $G$ with given minimum degree.

\vskip 0.1in \noindent {\bf Key Words}: \ Fractional matching, Distance spectral radius, Factor, Minimum degree. \vskip
0.1in \noindent {\bf AMS Subject Classification (2020)}: \ 05C50,05C35}
\end{minipage}
\end{center}

\section{Introduction }

Throughout this paper all graphs considered are simple, connected and undirected. A graph $G$ is denoted by $G=(V(G),E(G))$, where $V(G)=\{v_1,v_2,\ldots, v_n\}$
is the vertex set and $E(G)$ is the edge set. The order of $G$ is $|V(G)|=n$ and its size is $|E(G)|=m$. The set of neighbours of the vertex $v_i$ denoted by $N_G(v_i)$, is the set of vertices adjacent to $v_i$. The degree $d(v_i)$ of the vertex $v_i$ in $G$ is the number of vertices of $G$ adjacent to $v_i$, i.e. $d(v_i)=|N_G(v_i)|$. We use $\Delta$ and $\delta$ to denote the maximum degree and the minimum degree of $G$, respectively. Let $C_n$, $K_n$ and $K_{1,n-1}$ denote the cycle, the complete graph and the star of order $n$, respectively. For $S \subseteq V(G)$, we use $G-S$ to denote the subgraph obtained from $G$ by deleting the vertices in $S$ together with their incident edges. For an edge $e\in E$, we use $G-e$
to denote the subgraph obtained from $G$ by deleting the edge $e$. For two vertex disjoint graphs $G_1$ and $G_2$, we use $G_1+ G_2$ to denote the disjoint union of $G_1$ and $G_2$. The join $G_1\vee G_2$ of $G_1$ and $G_2$ is the graph obtained from $G_1+G_2$ by adding all possible edges between $V(G_1)$ and $V(G_2)$. For more details and concepts, the readers may refer to \cite{BM}.

A fractional matching of a graph $G$ is a function $f$ giving each edge a number in $[0,1]$ such that $\sum_{e\in E_G(v_i)}f(e)\le 1$ for each $v_i\in V(G)$, where $E_G(v_i)=\{e:e\in E(G) \ \textrm{and}\  e \ \textrm{ is incident}$ $\textrm{with} \ v_i\}$. Let $\alpha_f(G)$ denote the fractional matching number of $G$, which is defined as $\alpha_f(G)=\max\{\sum_{e\in E(G)}f(e): f\ \textrm{is a fractional matching of} \ G\}$. Summing the inequality constraints for all vertices has $2\sum_{e\in E(G)}f(e)\leq n$, thus $\alpha_f(G)\leq \frac{n}{2}$. Therefore, a fractional matching of $G$ is called a fractional perfect matching if $\sum_{e\in E(G)}f(e)=\frac{n}{2}$.

For a connected graph $G$ of order $n$, the distance $d_{ij}$ between vertices $v_i$ and $v_j$ is the length of a shortest path from $v_i$ to $v_j$ in $G$.
The distance matrix $D(G)=(d_{ij})_{n\times n}$ of $G$ is a square matrix of order $n$. The largest eigenvalue of $D(G)$, denoted by $\mu(G)$, is called the distance spectral radius of $G$.

Recently, the relationship between the fractional matching number and the eigenvalues of graphs has been investigated by several researchers. For example, O \cite{O} determined the connections between the spectral radius of an $n$-vertex connected graph with minimum degree and its fractional matching number, and they also gave a lower bound on the fractional matching number in terms of the spectral radius and minimum degree. Xue et al. \cite{XZS} discussed the relations between the fractional matching number and the Laplacian spectral radius of a graph, and they obtained some lower bounds on the fractional matching number. Pan et al. \cite{PLZ} studied the existence of fractional perfect matchings for graphs with given order and minimum degree in terms of the signless Laplacian spectral radius. Yan et al. \cite{YLS} gave some lower bounds of distance Laplacian spectral radii of $n$-vertex graphs in terms of fractional matching number. Li et al. \cite{LMZ} established an upper bound on the distance spectral radius of a graph $G$ to ensure that $G$ has a fractional perfect matching. Lou et al. \cite{LLA} obtained a tight lower bound of the spectral radius to guarantee the fractional matching number more than $\frac{n-k}{2}$ in a graph with minimum degree $\delta$, where $0<k<n$ is an integer. Some classical conclusions about fractional matching can refer to \cite{HLZZ,ZHR}

Motivated by \cite{LLA}, in this paper we firstly investigate the relations between the distance spectral radius of a graph and its fractional matching number.

For a set $\{G_1,G_2,G_3,\dots\}$ of graphs, a $\{G_1,G_2,G_3,\dots\}$-factor of a graph $G$ is a spanning subgraph of $G$ each component of which is isomorphic to one of $\{G_1,G_2,G_3,\dots\}$. Hence if $H$ is a $\{G_1,G_2,G_3,\dots\}$-factor of graph $G$, then $H$ is a subgraph of $G$ such that $V(H)=V(G)$ and each component of $H$ is contained in $\{G_1,G_2,G_3,\dots\}$. Specifically, the $\{K_{1,1},K_{1,2},\dots,K_{1,k}\}$-factor is also called a star factor of $G$.

The factors of graphs have received a lot of attention of researchers and their theories are well developed in recent years. In \cite{T}, Tutte presented a sufficient and necessary condition for the existence of a $\{K_2, \{C_k\}\}$-factor, where $3\le k<+\infty$ is an integer. In \cite{AK}, Amahashi and Kano gave a necessary and sufficient condition of the existence of a $\{K_{1,1},K_{1,2},\dots,K_{1,k}\}$-factor in a graph, where $2\le k<+\infty$ is an integer. In \cite{LLA}, Lou et al. provided a tight spectral radius condition for the existence of a $\{K_{1,1},K_{1,2},\dots,K_{1,k}\}$-factor with $2\le k<+\infty$ in a graph with minimum degree $\delta$.  In \cite{ML}, Miao and Li established a lower bound on the size of a graph $G$ to guarantee that $G$ contains a star factor. They also determined an upper bound on the spectral radius of $G$ to ensure that $G$ has a star factor.

Inspired by \cite{LLA}, in this paper we provide conditions in terms of distance spectral radius for the existence of some factors.

The rest of this paper is divided into the following sections. In Section 2, we will introduce some concepts and lemmas to prove the theorems in the following sections. In Section 3, we establish a sharp upper bound for the distance spectral radius to guarantee $\alpha_f(G)>\frac{n-k}{2}$ in a graph $G$ of order $n$ with given minimum degree $\delta$, where $0<k<n$ is an integer. In Section 4, we give a sharp upper bound on the distance spectral radius of a graph $G$ with given minimum degree $\delta$ to ensure that $G$ has a $\{K_2, \{C_k\}\}$-factor, where $3\le k<+\infty$ is an integer. In addition, we also obtain a sharp upper bound on the distance spectral radius for the existence of a $\{K_{1,1},K_{1,2},\dots,K_{1,k}\}$-factor with $2\le k<+\infty$ in a graph $G$ with given minimum degree $\delta$.

\section{Preliminaries}

In this section, we will introduce some preliminary results and useful lemmas which will be used in the follows. Firstly, we give some lemmas about fractional matching number, the sufficient and necessary condition for the existence of $\{K_2,\{C_k\}\}$-factor and $\{K_{1,1},K_{1,2},\dots,K_{1,k}\}$-factor in a graph. For $S \subseteq V(G)$, we use $i(G-S)$ to denote the number of isolated vertices in the graph $G-S$.

\begin{lemma}\label{le:2} (\cite{SU}) \ Let $G$ be a graph of order $n$. Then
	$$\alpha_f(G)=\frac{1}{2}(n-\max\{i(G-S)-|S|: \ S \subseteq V(G)\}).$$
\end{lemma}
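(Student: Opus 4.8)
This identity is the fractional analogue of the Berge--Tutte formula (with the matching number replaced by $\alpha_f$ and the number of odd components replaced by the number of isolated vertices), and the plan is to derive it from linear programming duality together with the half-integrality of the fractional matching polytope. Writing $M\in\{0,1\}^{V(G)\times E(G)}$ for the vertex--edge incidence matrix of $G$, we have $\alpha_f(G)=\max\{\mathbf{1}^{T}f:\ Mf\le\mathbf{1},\ f\ge 0\}$, and the LP dual of this program is the fractional vertex cover problem $\tau_f(G):=\min\{\mathbf{1}^{T}y:\ y_u+y_v\ge 1\ \text{for every}\ uv\in E(G),\ y\ge 0\}$, so by strong duality $\alpha_f(G)=\tau_f(G)$. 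Setting $D:=\max\{i(G-S)-|S|:S\subseteq V(G)\}$, it suffices to prove the two inequalities $\alpha_f(G)\le\frac12(n-D)$ and $\alpha_f(G)\ge\frac12(n-D)$.

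For the first inequality I would argue directly on fractional matchings. Fix a set $S$ attaining $D$, let $I$ be the set of vertices isolated in $G-S$ (so $|I|=i(G-S)$), and put $R=V(G)\setminus(S\cup I)$. For any fractional matching $f$ one has $2\sum_{e}f(e)=\sum_{v}\sum_{e\ni v}f(e)$; the contributions of the vertices of $S$ and of $R$ are at most $|S|$ and $|R|$ respectively, and since every edge meeting $I$ has its other endpoint in $S$ (because $I$ spans no edge and has no neighbour in $R$), the total contribution of $I$ is at most $\sum_{u\in S}\sum_{e\ni u}f(e)\le|S|$. Hence $2\sum_{e}f(e)\le 2|S|+|R|=n+|S|-|I|$, which yields $\sum_{e}f(e)\le\frac12\bigl(n-(i(G-S)-|S|)\bigr)=\frac12(n-D)$, and therefore $\alpha_f(G)\le\frac12(n-D)$.

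For the second inequality I would use that the polyhedron $\{y\ge 0:\ y_u+y_v\ge 1\ \text{for all}\ uv\in E(G)\}$ is half-integral, i.e.\ every vertex of it has all coordinates in $\{0,\tfrac12,1\}$; this follows from the standard perturbation argument (if an extreme point has a coordinate strictly between two of these values, shifting the fractional coordinates exceeding $\tfrac12$ up by $\varepsilon$ and those below $\tfrac12$ down by $\varepsilon$, and then doing the reverse, exhibits two feasible points whose average is the given point). Take an optimal half-integral solution $y^{*}$ and write $V_t=\{v:y^{*}_v=t\}$ for $t\in\{0,\tfrac12,1\}$. The edge constraints force $N_G(V_0)\subseteq V_1$, and optimality forbids any vertex of $V_{1/2}$ from having all of its neighbours in $V_1$ (otherwise resetting that coordinate to $0$ would give a feasible point of smaller weight), so $G[V_{1/2}]$ has no isolated vertex. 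Now take $S:=V_1$: the vertices isolated in $G-S$ are precisely those of $V_0$, so $i(G-S)=|V_0|$, and combining $|V_0|+|V_{1/2}|+|V_1|=n$ with $\tau_f(G)=|V_1|+\tfrac12|V_{1/2}|$ gives $i(G-S)-|S|=|V_0|-|V_1|=n-2\tau_f(G)=n-2\alpha_f(G)$. Hence $D\ge n-2\alpha_f(G)$, i.e.\ $\alpha_f(G)\ge\frac12(n-D)$, and with the previous paragraph this gives the claimed equality (and incidentally shows that $S=V_1$ attains the maximum). The only non-routine ingredient --- and the step I expect to need the most care --- is the half-integrality of the fractional vertex cover polytope; the duality and the arithmetic are routine. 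Since the statement is classical, one may of course also simply cite \cite{SU}.
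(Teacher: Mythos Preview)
Your argument is correct and is essentially the standard LP-duality/half-integrality proof of the fractional Berge--Tutte formula. Note, however, that the paper does not prove this lemma at all: it is quoted verbatim from \cite{SU} as a preliminary result, so there is no ``paper's own proof'' to compare against --- your final sentence (``one may of course also simply cite \cite{SU}'') is exactly what the authors do.
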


\begin{lemma}\label{le:3} (\cite{T}) \ Let $G$ be a graph of order $n$ and $3\le k<+\infty$ be an integer. Then $G$ has a $\{K_2,\{C_k\}\}$-factor if and only if $i(G-S)\le |S|$ for every $S\subseteq V(G)$.
\end{lemma}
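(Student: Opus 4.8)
The plan is to deduce Lemma~\ref{le:3} from Lemma~\ref{le:2} by passing through fractional perfect matchings, using the classical half-integrality of the fractional matching polytope. The point is that a spanning subgraph whose components are copies of $K_2$ or cycles is exactly the ``support shape'' of a half-integral fractional perfect matching.

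First I would set up a dictionary between the two kinds of objects. If $H$ is a spanning subgraph of $G$ all of whose components are copies of $K_2$ or cycles, then assigning $f(e)=1$ to every edge lying in a $K_2$-component, $f(e)=\tfrac12$ to every edge lying in a cycle-component, and $f(e)=0$ to all other edges yields a fractional matching $f$ of $G$ with $\sum_{e\in E_G(v)}f(e)=1$ for every $v\in V(G)$, hence $\sum_{e\in E(G)}f(e)=\tfrac n2$ and $\alpha_f(G)=\tfrac n2$. Conversely, I would start from any half-integral fractional perfect matching $f\colon E(G)\to\{0,\tfrac12,1\}$ and look at its support $H=\{e\in E(G):f(e)>0\}$. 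At a vertex $v$, the equality $\sum_{e\in E_G(v)}f(e)=1$ forces either one incident edge of weight $1$ (and then that edge together with its other endpoint is a $K_2$-component of $H$, since the other endpoint can carry no further positive weight) or exactly two incident edges of weight $\tfrac12$; thus every component of $H$ is either a $K_2$ or is $2$-regular, i.e.\ a cycle, and $H$ is spanning. Consequently $G$ has a $\{K_2,\{C_k\}\}$-factor if and only if $G$ admits a half-integral fractional perfect matching.

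Second, I would invoke the fact that every vertex of the fractional matching polytope $P(G)=\{x\in\mathbb{R}^{E(G)}: x\ge 0,\ \sum_{e\in E_G(v)}x(e)\le 1\ \text{for all } v\}$ is half-integral (its nonzero coordinates form a matching together with vertex-disjoint odd cycles). Since $\alpha_f(G)$ is the maximum of the linear functional $x\mapsto\sum_{e\in E(G)}x(e)$ over $P(G)$, it is attained at a vertex $x^{*}$ of $P(G)$, which is therefore half-integral; and if $\alpha_f(G)=\tfrac n2$ then $\sum_{e}x^{*}(e)=\tfrac n2=\tfrac12\sum_{v}\sum_{e\in E_G(v)}x^{*}(e)$ forces every constraint to be tight, so $x^{*}$ is a half-integral fractional perfect matching. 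Combining with the previous paragraph, $G$ has a $\{K_2,\{C_k\}\}$-factor if and only if $\alpha_f(G)=\tfrac n2$. Finally I would apply Lemma~\ref{le:2}: from $\alpha_f(G)=\tfrac12\bigl(n-\max\{i(G-S)-|S|:S\subseteq V(G)\}\bigr)$, the condition $\alpha_f(G)=\tfrac n2$ is equivalent to $\max_{S}\bigl(i(G-S)-|S|\bigr)=0$, and since the choice $S=\emptyset$ already gives $i(G-S)-|S|\ge 0$, this holds exactly when $i(G-S)\le|S|$ for every $S\subseteq V(G)$. Chaining the three equivalences completes the proof.

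The step needing the most care is the half-integrality input (existence of an optimal fractional matching in half-integral form): I would either cite it, since it belongs to the same circle of results as Lemma~\ref{le:2}, or include the short exchange argument that any fractional matching whose support has a component containing a non-cycle edge, or two distinct cycles, can be perturbed along that structure without decreasing $\sum_{e}x(e)$, leaving only the half-integral configurations. Incidentally, the ``only if'' direction can also be proved by a self-contained count: given such a factor $F$ and any $S\subseteq V(G)$, every vertex isolated in $G-S$ has all of its $F$-neighbours in $S$, and an edge count inside each component $C$ of $F$ meeting the isolated set shows $|C\cap I|\le|C\cap S|$ (for a $K_2$-component this is immediate; for a cycle, each isolated vertex contributes two distinct edges to $S$ and each vertex of $C\cap S$ absorbs at most two such edges), whence $i(G-S)\le|S|$; so if one prefers to keep Lemma~\ref{le:3} independent of the fractional-matching machinery, only the ``if'' direction really requires it.
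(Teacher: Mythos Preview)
The paper does not prove this lemma; it is cited from Tutte and used as a black box in Section~4, so there is no argument in the paper to compare yours against.

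Your proof is correct. You rightly interpret $\{K_2,\{C_k\}\}$ as ``$K_2$ together with cycles of all lengths $\ge 3$''---this is Tutte's actual theorem, and the version with a single fixed $k$ would be false (for instance $K_5$ satisfies $i(K_5-S)\le|S|$ for every $S$ but admits no $\{K_2,C_4\}$-factor, since $5$ is not of the form $2a+4b$). The three equivalences you chain all hold: the dictionary between $\{K_2,\text{cycle}\}$-factors and half-integral fractional perfect matchings is exact (simplicity of $G$ rules out $2$-cycles in the support of the half-integral solution); the passage from $\alpha_f(G)=n/2$ to a half-integral optimal solution uses only the half-integrality of extreme points of the fractional matching polytope, whose standard perturbation proof is independent of Tutte's theorem, so deriving Lemma~\ref{le:3} from Lemma~\ref{le:2} introduces no circularity; and the final step is an immediate reading of Lemma~\ref{le:2}, with $S=\emptyset$ forcing the maximum to be nonnegative. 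Your alternative counting argument for the ``only if'' direction is also valid once you note that two vertices isolated in $G-S$ cannot be adjacent in $G$ (hence not in the factor), so on a cycle component $C$ the $2|C\cap I|$ edges incident to $C\cap I$ really do land in $C\cap S$.
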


\begin{lemma}\label{le:4} (\cite{AK}) \ Let $G$ be a graph of order $n$ and $2\le k<+\infty$ be an integer. Then $G$ has a $\{K_{1,1},K_{1,2},\dots,K_{1,k}\}$-factor if and only if $i(G-S)\le k|S|$ for every $S\subseteq V(G)$.
\end{lemma}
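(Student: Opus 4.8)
The plan is to prove both implications, the forward one by a direct counting argument and the converse by an extremal augmentation argument. For necessity, suppose $F$ is a $\{K_{1,1},\dots,K_{1,k}\}$-factor of $G$, fix $S\subseteq V(G)$, and let $I$ denote the set of isolated vertices of $G-S$. Each $v\in I$ lies on an edge of $F$ and has all of its $G$-neighbours in $S$, so the star component of $F$ through $v$ contains a vertex of $S$; since the components of $F$ are pairwise vertex-disjoint, at most $|S|$ of them meet $I$. A star $K_{1,j}$ with $j\le k$ that meets $I$ contains at most $k$ vertices of $I$ (only its centre, if the centre lies in $I$; at most its $j\le k$ leaves, if the centre lies in $S$; and none otherwise), so summing over the relevant stars yields $i(G-S)=|I|\le k|S|$.

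For sufficiency I would argue by contradiction. Call a spanning subgraph $F$ of $G$ \emph{admissible} if each of its components is a single vertex or a star $K_{1,j}$ with $1\le j\le k$; since $i(G-\emptyset)\le 0$ forces $\delta(G)\ge 1$, admissible subgraphs with at least one edge exist, and I would take one, say $F$, covering as many vertices as possible by its star components, under the assumption that the set $U$ of vertices lying in trivial components of $F$ is nonempty. I would then grow a set $S$ of ``full centres'': starting from $A:=U$ and $S:=\emptyset$, while some $v\in A$ is $G$-adjacent to a centre $c\notin S$ of a star of $F$ with exactly $k$ leaves, put $c$ into $S$ and all $k$ leaves of $c$ into $A$. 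This process terminates, and the key claim is that at termination every $v\in A$ satisfies $N_G(v)\subseteq S$: a neighbour of $v$ that is uncovered, or that is the centre of a star of $F$ with fewer than $k$ leaves (a $K_{1,1}$ qualifying after re-rooting at either endpoint, using $k\ge2$), or that is a leaf of a star of $F$ with at least two leaves, would --- after a chain of edge-exchanges traced back along the full centres recorded while building $S$ --- produce an admissible subgraph covering strictly more vertices than $F$, contradicting the choice of $F$. Granting the claim, every vertex of $A$ is isolated in $G-S$, whence $i(G-S)\ge|A|=|U|+k|S|\ge 1+k|S|>k|S|$, contradicting the hypothesis.

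The hard part will be the exchange/augmentation claim. For each of the three forbidden neighbour types one must write down the improving admissible subgraph explicitly: this amounts to releasing the offending vertex from its star, absorbing it (into a new $K_{1,1}$, or into an adjacent non-full star), and then relaying one leaf down the recorded chain of full stars $c_1,\dots,c_m$ so that, after shifting, a slot opens at $c_1$ into which the original uncovered vertex can be inserted --- all while never letting any star drop below one edge. Several degenerate configurations need individual care (for example $k=2$ with the two relevant vertices being the two leaves of one full star, where the reroute must instead spin off a fresh $K_{1,1}$ at the centre), and it is exactly this lower-bound constraint ``every star keeps at least one edge'' that forces the multiplicative factor $k$ in $i(G-S)\le k|S|$ rather than the cleaner condition $i(G-S)\le|S|$. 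Should one wish to bypass this bookkeeping, two alternatives are available: an induction on $|V(G)|$ via a minimal counterexample, distinguishing whether some nonempty $S$ attains $i(G-S)=k|S|$; or modelling a maximum-coverage admissible subgraph as an integral flow with lower capacities in an auxiliary network and extracting the violating set $S$ from a minimum cut.
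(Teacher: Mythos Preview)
The paper does not prove this lemma at all: it is quoted verbatim from Amahashi and Kano \cite{AK} and used as a black box, so there is no ``paper's own proof'' to compare against. What you have written is therefore strictly more than the paper offers.

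On the merits of your sketch: the necessity argument is clean and correct. Every isolated vertex of $G-S$ lies in a star of $F$ that must also meet $S$, distinct stars are disjoint, and each such star contributes at most $k$ vertices to $I$; the case split on where the centre sits is exactly right.

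For sufficiency, your outline follows the classical alternating/augmenting strategy and is headed in the right direction, but as you yourself flag, it is not yet a proof. The crux is the claim that at termination every $v\in A$ satisfies $N_G(v)\subseteq S$. You enumerate the three neighbour types that would trigger an improvement and describe the intended edge-exchange in words, but the actual verification that each relay step preserves admissibility (no star drops to zero edges, no star exceeds $k$ leaves, the chain back to $U$ genuinely frees a slot) is deferred. The degenerate cases you mention (e.g.\ $k=2$ with both relevant vertices on the same full star) are real and do require separate handling. Until those exchanges are written out, the sufficiency direction remains a plan rather than a proof. Either of your proposed alternatives (minimal counterexample induction, or the network-flow/min-cut extraction) would also work and might in fact be shorter to execute than the full augmenting-path bookkeeping.
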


Nonnegative matrix theory plays an important role in the study of spectral graph theory, especially in comparing the eigenvalues of graphs. Next we will introduce some concepts and lemmas of nonnegative matrices.

\begin{definition}\label{def1}(\cite{AR}) Let $A=(a_{ij})$ and $B=(b_{ij})$ be two $n\times n$ matrices. If $a_{ij}\le b_{ij}$ for all $i$ and $j$, then $A\le B$. If $A\le B$ and $A\ne B$, then $A<B$. If $a_{ij}< b_{ij}$ for all $i$ and $j$, then $A\ll B$.
\end{definition}

\begin{lemma}\label{le:6} (\cite{AR}) \ Let $A$ and $B$ be two $n \times n$ nonnegative matrices with the spectral radius $\rho(A)$ and $\rho(B)$, respectively. If $A\le B$, then $\rho(A) \le \rho(B)$. In addition, if $A< B$ and $B$ is irreducible, then $\rho(A)<\rho(B)$.
\end{lemma}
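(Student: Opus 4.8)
The plan is to establish the two assertions separately: the non-strict inequality by a limiting argument on matrix powers, and the strict inequality by a Perron--Frobenius eigenvector comparison, with a little care because $A$ is not assumed irreducible. For the inequality $\rho(A)\le\rho(B)$, observe that since $A,B\ge 0$ and $A\le B$ entrywise, an easy induction — using $A^{k+1}=A\cdot A^{k}\le B\cdot B^{k}=B^{k+1}$, which is legitimate because all factors are nonnegative — gives $0\le A^{k}\le B^{k}$ for every $k\ge 1$. Taking the entrywise sum-norm $\|M\|=\sum_{i,j}|m_{ij}|$, which is submultiplicative and monotone on nonnegative matrices, we get $\|A^{k}\|\le\|B^{k}\|$, and Gelfand's formula $\rho(M)=\lim_{k\to\infty}\|M^{k}\|^{1/k}$ then yields $\rho(A)\le\rho(B)$. (Alternatively one can replace $A,B$ by the positive, hence irreducible, matrices $A+\varepsilon J$ and $B+\varepsilon J$, compare their Perron roots via a positive left Perron eigenvector, and let $\varepsilon\to 0^{+}$ using continuity of the spectral radius.)

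For the strict inequality I would argue by contradiction. Assume $A<B$, $B$ irreducible, but $\rho(A)=\rho(B)=:\rho$; since the non-strict inequality is already known, this is the only case to exclude. By the Perron--Frobenius theorem for nonnegative matrices, $\rho$ is an eigenvalue of $A$ with a nonzero nonnegative right eigenvector $z\ge 0$, so $Az=\rho z$, and hence $Bz\ge Az=\rho z$ because $B\ge A$ and $z\ge 0$. Since $B$ is irreducible it admits a strictly positive left Perron eigenvector $y\gg 0$ with $y^{\top}B=\rho\,y^{\top}$; pairing with $z$ gives $y^{\top}(Bz-\rho z)=\rho\,y^{\top}z-\rho\,y^{\top}z=0$, and as $y\gg 0$ while $Bz-\rho z\ge 0$, this forces $Bz=\rho z$. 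Thus $z$ is a nonnegative eigenvector of the irreducible matrix $B$ for its spectral radius, so by Perron--Frobenius $z\gg 0$. Finally $(B-A)z=Bz-Az=\rho z-\rho z=0$ with $B-A\ge 0$ and $z\gg 0$ forces $B-A=0$, contradicting $A<B$; therefore $\rho(A)<\rho(B)$.

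The one point needing genuine care — the main, and rather mild, obstacle — is that $A$ itself is not assumed irreducible, so a priori its Perron eigenvector $z$ may have vanishing coordinates, and one cannot immediately exploit a strict inequality in some entry of $B-A$. The argument above sidesteps this: under the equality hypothesis $z$ turns out to be a Perron eigenvector of $B$ as well, at which point irreducibility of $B$ promotes $z$ to a strictly positive vector and the contradiction is immediate. Everything else is routine nonnegative-matrix theory, so I would keep the write-up short.
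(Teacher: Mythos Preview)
Your argument is correct and is essentially the standard textbook proof of this result; however, the paper does not supply a proof at all --- it simply cites the lemma from Berman and Plemmons \cite{AR} and uses it as a black box. So there is nothing to compare against, and your write-up would in fact add content that the paper omits.
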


\begin{lemma}\label{le:7} (\cite{AR}) \ Let $m<n$, $A$ and $B$ be the $n \times n$ and $m \times m$ nonnegative matrices with the spectral radius $\rho(A)$ and $\rho(B)$, respectively. If $B$ is a principal submatrix of $A$, then $\rho(B) \le \rho(A)$. In addition, if $A$ is irreducible, then $\rho(B)<\rho(A)$.
\end{lemma}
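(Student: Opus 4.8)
The plan is to reduce the whole statement to Lemma \ref{le:6} by embedding $B$ into an $n\times n$ matrix that has the same spectral radius as $B$ and is dominated entrywise by $A$. Since $B$ is a principal submatrix of $A$, fix an index set $S\subseteq\{1,2,\dots,n\}$ with $|S|=m$ such that $B=A[S]$, the submatrix of $A$ on the rows and columns indexed by $S$. Define the $n\times n$ matrix $\widehat{B}=(\widehat b_{ij})$ by $\widehat b_{ij}=a_{ij}$ if $i,j\in S$ and $\widehat b_{ij}=0$ otherwise. Because $A$ is nonnegative, $\widehat b_{ij}\le a_{ij}$ for all $i,j$, so $\widehat{B}\le A$ in the sense of Definition \ref{def1}.

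Next I would check that $\rho(\widehat{B})=\rho(B)$. After a simultaneous permutation of rows and columns that lists the indices of $S$ first, $\widehat{B}$ takes the block form $\bigl(\begin{smallmatrix} B & 0\\ 0 & 0\end{smallmatrix}\bigr)$, whose characteristic polynomial is $\lambda^{\,n-m}\det(\lambda I-B)$; hence the eigenvalues of $\widehat{B}$ are exactly those of $B$ together with $n-m$ additional zeros. Since the spectral radius is a nonnegative quantity, $\rho(B)\ge 0$, and therefore the largest modulus of an eigenvalue is unchanged: $\rho(\widehat{B})=\rho(B)$. Applying the first part of Lemma \ref{le:6} to $\widehat{B}\le A$ now gives $\rho(B)=\rho(\widehat{B})\le\rho(A)$, which is the first assertion.

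For the strict inequality, assume in addition that $A$ is irreducible. Since $m<n$, the set $T=\{1,\dots,n\}\setminus S$ is nonempty. If we had $\widehat{B}=A$, then $a_{ij}=0$ whenever $i\in T$ or $j\in T$, so the rows and columns of $A$ indexed by the nonempty proper subset $T$ would all vanish; moving any index of $T$ to the last position exhibits $A$ in reducible block form, contradicting the irreducibility of $A$. Hence $\widehat{B}<A$ with $A$ irreducible, and the second part of Lemma \ref{le:6} yields $\rho(B)=\rho(\widehat{B})<\rho(A)$, completing the proof.

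This argument is essentially bookkeeping, so there is no genuine obstacle; the only points that demand a moment's care are the eigenvalue-padding identity $\rho(\widehat{B})=\rho(B)$ and the observation that irreducibility of $A$ forces $\widehat{B}\neq A$ as soon as $m<n$. A self-contained alternative would be to invoke the Collatz--Wielandt characterization $\rho(A)=\max_{x\ge 0,\,x\neq 0}\min_{i:\,x_i>0}(Ax)_i/x_i$ and test it on the zero-extension of a Perron vector of $B$, but routing through Lemma \ref{le:6} keeps the proof shortest and self-consistent with the surrounding exposition.
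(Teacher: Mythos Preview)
Your argument is correct. The paper, however, does not supply its own proof of this lemma: it is quoted verbatim from \cite{AR} as a background fact, so there is no in-paper proof to compare against. Your route via the zero-padded matrix $\widehat{B}$ and Lemma~\ref{le:6} is the standard one and would be perfectly acceptable as a self-contained justification; the observation that irreducibility of $A$ forces $\widehat{B}\neq A$ when $m<n$ is handled cleanly.
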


By Lemma \ref{le:6}, we can get an important result about the distance radius of a connected graph $G$.

\begin{lemma}\label{le:8} (\cite{G}) \ Let $e$ be an edge of graph $G$ such that $G-e$ is still connected. Then
	$\mu(G)<\mu(G-e)$.
\end{lemma}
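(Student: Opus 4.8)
\textbf{Proof proposal for Lemma \ref{le:8}.} The plan is to show that deleting an edge (while preserving connectivity) can only enlarge distances, never shrink them, and then invoke the strict monotonicity part of Lemma \ref{le:6}. First I would fix the notation: write $e=xy$ and let $d_G(\cdot,\cdot)$, $d_{G-e}(\cdot,\cdot)$ denote the distances in $G$ and $G-e$ respectively. Since every $u$--$v$ path in $G-e$ is also a $u$--$v$ path in $G$, every shortest $u$--$v$ path in $G-e$ is an admissible (not necessarily shortest) $u$--$v$ path in $G$, whence $d_G(u,v)\le d_{G-e}(u,v)$ for all $u,v\in V(G)=V(G-e)$. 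Translating this into matrix language gives $D(G)\le D(G-e)$ in the sense of Definition \ref{def1}.

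Next I would argue that the inequality is strict as matrices, i.e.\ $D(G)<D(G-e)$. This is where the edge endpoints come in: in $G$ we have $d_G(x,y)=1$, while in $G-e$ the vertices $x$ and $y$ are no longer adjacent, so $d_{G-e}(x,y)\ge 2$. Hence the $(x,y)$ entry of $D(G-e)$ strictly exceeds that of $D(G)$, so $D(G)\ne D(G-e)$, and combined with the previous paragraph this yields $D(G)<D(G-e)$.

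Finally, since $G-e$ is connected, any two distinct vertices are joined by a path, so every off-diagonal entry of $D(G-e)$ is a positive integer; in particular $D(G-e)$ is a nonnegative irreducible matrix (its digraph of positive entries is the complete digraph on $n$ vertices). Both $D(G)$ and $D(G-e)$ are nonnegative, $D(G)<D(G-e)$, and $D(G-e)$ is irreducible, so Lemma \ref{le:6} applies with $A=D(G)$ and $B=D(G-e)$ and gives $\rho(D(G))<\rho(D(G-e))$, that is, $\mu(D(G))<\mu(D(G-e))$, as desired.

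I do not anticipate a serious obstacle here: the only points requiring care are the monotonicity of distances under edge deletion (which is immediate once one notes that removing an edge only removes paths, never creates them) and the verification that $D(G-e)$ is irreducible so that the \emph{strict} conclusion of Lemma \ref{le:6} is available. If one wished to avoid citing \cite{G}, this argument is entirely self-contained given the lemmas already stated.
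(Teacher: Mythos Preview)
Your proposal is correct and matches the paper's own treatment: the paper does not give a detailed proof but merely remarks ``By Lemma~\ref{le:6}, we can get an important result about the distance radius of graph $G$'' before citing \cite{G}, which is exactly the route you take (entrywise $D(G)\le D(G-e)$ with strict inequality at the $(x,y)$ entry, irreducibility of $D(G-e)$, then Lemma~\ref{le:6}).
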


Finally, we give the definition of the equitable quotient matrix and its property.

\noindent\begin{definition}\label{D1}(\cite{YYSX}) Let $M$ be a complex matrix of order $n$ described in the following block form
	\begin{equation*}
		M=\begin{bmatrix}
			M_{11} & \cdots & M_{1t} \\
			\vdots& \ddots & \vdots \\
			M_{t1} &\cdots & M_{tt}
		\end{bmatrix},
	\end{equation*}
where the blocks $M_{ij}$ are the $n_i\times n_j$ matrices for any $1\le i,j\le t$ and $n=n_1+n_2+\cdots+n_t$. For  $1\le i,j\le t$, let $b_{ij}$ denote the average row sum of $M_{ij}$, i.e. $b_{ij}$ is the sum of all entries in $M_{ij}$ divided by the number of rows. Then $B(M)=(b_{ij})$(or simply $B$) is called the quotient matrix of $M$. If, in addition, for each pair $i$, $j$, $M_{ij}$ has a constant row sum, then $B$ is called the equitable quotient matrix of $M$.
\end{definition}

\noindent\begin{lemma}\label{le:10}(\cite{YYSX}) Let $B$ be the equitable quotient matrix of $M$, where $M$ is as shown in Definition \ref{D1}. In addition, let $M$ be a nonnegative matrix. Then the spectral radius relation satisfies $\rho(B)=\rho(M)$, where $\rho(B)$ and $\rho(M)$ denote the spectral radii of $B$ and $M$ respectively.
\end{lemma}

\section{Fractional matching number and distance spectral radius}

First of all, we will introduce a useful theorem which was proposed by Scheinerman and Ullman \cite{SU}. This theorem plays an important role in the fractional matching of graphs.

\noindent\begin{theorem}\label{le:1} (\cite{SU}) \ Let $G$ be a graph of order $n$. Then \\
	(a) any fractional matching satisfies $\alpha_f(G) \le \frac{n}{2}$.\\
	(b) $2\alpha_f(G) $ is an integer.
\end{theorem}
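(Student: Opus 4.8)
The plan is to realize $\alpha_f(G)$ as the optimal value of a linear program over the \emph{fractional matching polytope}
$$P(G)=\Bigl\{x\in\mathbb{R}^{E(G)}: x_e\ge 0\ \text{for all }e,\ \ \sum_{e\in E_G(v)}x_e\le 1\ \text{for all }v\in V(G)\Bigr\},$$
so that $\alpha_f(G)=\max\{\mathbf{1}^{\top}x:x\in P(G)\}$. Part (a) is then the computation already recorded in the introduction: for any $f\in P(G)$, summing the $n$ vertex constraints gives $\sum_{v}\sum_{e\in E_G(v)}f(e)\le n$, and since each edge is incident with exactly two vertices the left side equals $2\sum_{e}f(e)$, so $\sum_e f(e)\le n/2$ and hence $\alpha_f(G)\le n/2$. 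For part (b) the strategy is: (1) the linear objective attains its maximum over the bounded polyhedron $P(G)$ at an extreme point $x$; (2) every extreme point of $P(G)$ is \textbf{half-integral}, i.e.\ $x_e\in\{0,\tfrac12,1\}$ for every $e\in E(G)$; (3) therefore an optimal half-integral $x$ has $\sum_e x_e\in\tfrac12\mathbb{Z}$, giving $2\alpha_f(G)\in\mathbb{Z}$.

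The crux is step (2), and it is the step I expect to cost the most care. I would argue by contradiction: let $x$ be an extreme point and let $H$ be the spanning subgraph of $G$ consisting of the edges with $0<x_e<1$. First note that no vertex of $H$ can be incident in $G$ to an edge of value $1$, since its constraint would already exceed $1$. Now if $H$ contained a closed walk of even length, or a path joining two vertices at which the vertex constraint is slack, or two odd cycles joined by a (possibly trivial) path, then assigning alternating weights $\pm1$ to the edges along that structure produces a direction $\varepsilon\in\mathbb{R}^{E(G)}$ (supported on $H$) for which both $x+t\varepsilon$ and $x-t\varepsilon$ lie in $P(G)$ for all sufficiently small $t>0$, contradicting extremality. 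Hence every component of $H$ must be an odd cycle on all of whose vertices the constraint is tight, which forces $x_e=\tfrac12$ along each such cycle; together with $x_e\in\{0,1\}$ off $H$ this is exactly half-integrality. The delicate point is verifying that the alternating $\pm1$ perturbation really preserves every vertex inequality in both directions, which is where the odd-cycle/slack-vertex case analysis is needed.

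Finally, a much shorter route is available from \emph{Lemma \ref{le:2}}, already stated above. Writing $\beta=\max\{i(G-S)-|S|:S\subseteq V(G)\}$, the choice $S=\varnothing$ gives $\beta\ge i(G)\ge 0$, so $\alpha_f(G)=\tfrac12(n-\beta)\le\tfrac n2$, proving (a); and $\beta$ is a difference of nonnegative integers, hence an integer, so $2\alpha_f(G)=n-\beta$ is an integer, proving (b). This route has essentially no obstacle, but it offloads everything onto the proof of Lemma~\ref{le:2}, which is itself the Gallai--Edmonds-type structure theorem for fractional matchings; the self-contained polytope argument above is the one I would actually write out, with the perturbation/extremality analysis as its main technical content.
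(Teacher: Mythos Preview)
The paper does not supply its own proof of this theorem: it is quoted verbatim as a result of Scheinerman and Ullman \cite{SU} and used as a black box (the only in-text justification is the one-line double-counting for part (a) in the introduction). So there is no ``paper's proof'' to compare against, and your proposal is supplying content the paper deliberately outsources.

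Both of your routes are sound. The short route via Lemma~\ref{le:2} is impeccable as written: $S=\varnothing$ gives $\beta\ge 0$, hence (a), and $\beta\in\mathbb{Z}$ gives (b). As you note, this merely relocates the work into Lemma~\ref{le:2}, which the paper also cites without proof; so within this paper it is a legitimate two-line derivation, while as a self-contained argument it is circular. Your polytope route is the standard one and is correct in outline: $P(G)$ is a bounded polyhedron so the linear objective is attained at a vertex, and the classical half-integrality of the fractional matching polytope then yields $2\alpha_f(G)\in\mathbb{Z}$. The one place to tighten is the case analysis forcing every component of $H=\{e:0<x_e<1\}$ to be an odd cycle. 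Your observation that no vertex of $H$ meets an edge of value $1$ is exactly right and is the key: it implies every leaf of $H$ is slack, so any tree component or any pendant tree attached to a cycle yields a path between two slack vertices and hence a feasible $\pm\varepsilon$ perturbation. Combined with the even-cycle and dumbbell/theta perturbations you mention, this leaves only vertex-disjoint odd cycles with all vertices tight, whence $x_e=\tfrac12$ there. None of this is wrong; it just needs the two or three extra sentences to make the exhaustion of cases explicit.
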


Since $\alpha_f(G) \le \frac{n}{2}$, it is natural to consider that can we find a condition for the distance spectral radius that makes the fractional matching number of a graph $G$ more than $\frac{n-k}{2}$ with given minimum degree, where $0<k<n$ is an integer? In addition, can we characterize the corresponding spectral extremal graphs? Based on the above considerations, we give the following theorem.

\noindent\begin{theorem}\label{T1.2}  \ Let $G$ be a connected graph of order $n\ge9k+10\delta+2$ with the minimum degree $\delta$, where $0<k<n$ is an integer. If $\mu(G)\le \mu(K_{\delta} \vee (K_{n-2\delta-k}+(\delta+k)K_1))$, then $\alpha_f(G)>\frac{n-k}{2}$ unless $G\cong K_{\delta} \vee (K_{n-2\delta-k}+(\delta+k)K_1)$. (see Figure 1)
\end{theorem}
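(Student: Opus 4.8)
The plan is to argue by contradiction: suppose $\alpha_f(G)>\frac{n-k}{2}$ fails, i.e. $\alpha_f(G)\le\frac{n-k}{2}$. By Lemma~\ref{le:2} there is a set $S\subseteq V(G)$ with $i(G-S)-|S|\ge k$; moreover Theorem~\ref{le:1}(b) forces $i(G-S)-|S|$ and $k$ to have the same parity, so in fact $i(G-S)\ge|S|+k$ (and we may take $k$ of the appropriate parity throughout). Write $s=|S|$. Since $\delta(G)=\delta$, each of the $i(G-S)\ge s+k$ isolated vertices of $G-S$ has all its neighbours in $S$, hence $s\ge\delta$, and in particular $s\ge1$. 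To maximize $\mu(G)$ subject to these constraints, one expects the extremal configuration to make $G$ as ``dense as possible'': $G$ should be a spanning subgraph of $H_{s}:=K_{s}\vee\bigl(K_{n-s-i}+iK_1\bigr)$ where $i=i(G-S)$, and by Lemma~\ref{le:8} adding edges strictly decreases $\mu$, so $\mu(G)\ge\mu(H_s)$ with equality essentially forcing $G=H_s$. Thus it suffices to show that among all admissible pairs $(s,i)$ with $\delta\le s$, $i\ge s+k$, $s+i\le n$, the quantity $\mu(H_{s,i}):=\mu\bigl(K_{s}\vee(K_{n-s-i}+iK_1)\bigr)$ is strictly larger than $\mu\bigl(K_{\delta}\vee(K_{n-2\delta-k}+(\delta+k)K_1)\bigr)=\mu(H_{\delta,\delta+k})$ unless $(s,i)=(\delta,\delta+k)$, which then contradicts the hypothesis $\mu(G)\le\mu(H_{\delta,\delta+k})$.

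The technical core is therefore a monotonicity analysis of $\mu(H_{s,i})$ in the two parameters. First I would fix $s$ and show $\mu(H_{s,i})$ is increasing in $i$ in the relevant range: intuitively, increasing the number of isolated vertices (at the expense of the clique $K_{n-s-i}$) lengthens distances, and a quantitative comparison via the equitable quotient matrix should give this. Since we want the \emph{smallest} $\mu$, this pushes $i$ down to its minimum $i=s+k$. Then, with $i=s+k$, I would show $\mu(H_{s,s+k})$ is increasing in $s$ over $s\ge\delta$ — here the competition is subtler because raising $s$ simultaneously enlarges the dominating clique $K_s$ (which shortens distances, decreasing $\mu$) and forces more isolated vertices (increasing $\mu$); the hypothesis $n>9k+10\delta+2$ is exactly the regime where the second effect wins. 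For both monotonicity claims the concrete tool is Lemma~\ref{le:10}: $H_{s,i}$ has an equitable partition into three parts ($K_s$, the clique $K_{n-s-i}$, the $i$ independent vertices), giving a $3\times3$ quotient matrix $B_{s,i}$ whose largest eigenvalue equals $\mu(H_{s,i})$; one writes out $B_{s,i}$ explicitly, forms its characteristic polynomial $\varphi_{s,i}(x)=\det(xI-B_{s,i})$, and compares $\varphi_{s,s+k}(x)$ with $\varphi_{s+1,s+k+1}(x)$ (and similarly for the $i$-direction) at $x=\mu(H_{s,s+k})$, using the standard fact that for a $3\times3$ nonnegative matrix with cubic $\varphi$ one has $\varphi(x)>0$ for $x>\rho$ and sign of $\varphi(\mu_{\text{old}})$ determines the direction of change of the spectral radius.

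The main obstacle I anticipate is precisely this polynomial comparison: the entries of $B_{s,i}$ are linear in $n$ with coefficients depending on $s,i,k$, so $\varphi_{s,i}(x)$ is a cubic in $x$ with messy coefficients, and showing $\varphi_{s+1,s+k+1}(\mu(H_{s,s+k}))<0$ (say) requires bounding $\mu(H_{s,s+k})$ from above and below first — e.g.\ by the trivial bounds $n-1\le$ (some average) and by trace/row-sum estimates — and then feeding these bounds into the cubic. Managing the error terms so that the inequality $n>9k+10\delta+2$ suffices (rather than a weaker or stronger threshold) will be the delicate bookkeeping. A secondary point requiring care is the equality discussion: once $(s,i)=(\delta,\delta+k)$ is pinned down, I must rule out $G$ being a proper spanning subgraph of $H_{\delta,\delta+k}$; since $H_{\delta,\delta+k}$ is connected and has connected edge-deleted subgraphs along any edge whose removal keeps it connected, Lemma~\ref{le:8} gives $\mu(G)>\mu(H_{\delta,\delta+k})$ strictly for any proper spanning subgraph that is still connected, so $G\cong H_{\delta,\delta+k}$ is forced — this is the ``unless'' clause. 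Finally, one checks directly that $H_{\delta,\delta+k}$ itself has $\alpha_f=\frac{n-k}{2}$ (take $S$ to be its dominating clique $K_\delta$), confirming sharpness.
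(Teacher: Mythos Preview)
Your plan is essentially the paper's proof: contradiction via Lemma~\ref{le:2}, reduction to $H_{s,i}=K_s\vee(K_{n-s-i}+iK_1)$, equitable $3\times3$ quotient matrix, and a characteristic-polynomial comparison showing the minimum of $\mu(H_{s,i})$ over admissible $(s,i)$ is attained uniquely at $(\delta,\delta+k)$. Two tactical simplifications from the paper are worth adopting. First, the monotonicity in $i$ requires no polynomial work at all: for $i'>i$ the graph $H_{s,i}$ is obtained from $H_{s,i'}$ by adding edges (move $i'-i$ of the isolated vertices into the clique), so Lemma~\ref{le:8} gives $\mu(H_{s,i})<\mu(H_{s,i'})$ immediately, and one may set $i=s+k$ from the outset. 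Second, you do not need any upper bound on $\mu$; since each $H_{s,s+k}$ is a proper spanning subgraph of $K_n$, Lemma~\ref{le:8} gives $\mu(H_{s,s+k})>n-1$, and the paper simply shows $f_s(x)-f_\delta(x)<0$ on $[n-1,\infty)$ by factoring out $(\delta-s)$ and verifying the remaining quadratic in $x$ is positive there---this direct comparison of $s$ to $\delta$ is cleaner than the incremental $s\to s+1$ step you propose and is where the threshold $n>9k+10\delta+2$ enters. (Your parity aside via Theorem~\ref{le:1}(b) is unnecessary: $i(G-S)\ge|S|+k$ follows already from $\alpha_f(G)\le\frac{n-k}{2}$ and Lemma~\ref{le:2}.)
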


\begin{figure}[H]
	\begin{centering}
		\includegraphics[scale=1]{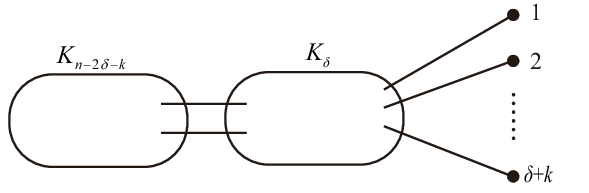}
		\caption{The extremal graph of Theorem 3.2.}\label{Fig.1.}
	\end{centering}
\end{figure}

\begin{proof}  Let $G$ be a graph of order $n\ge9k+10\delta+2$ and minimum degree $\delta$. By the way of contradiction, we assume that the fractional matching number $\alpha_f(G)\le\frac{n-k}{2}$. By Lemma \ref{le:2}, there exists a vertex subset $S\subseteq V(G)$ such that $\alpha_f(G)=\frac{1}{2}(n-(i(G-S)-|S|))$. Thus $i(G-S)-|S|\ge k$. The set of isolated vertices in $G-S$ is denoted by $T$. Let $|S|=s$ and $|T|=t$. Then $t=i(G-S)\ge s+k$. Since $s+t\le n$, we have $s\le\frac{n-k}{2}$. It is easy to see that $N_G(T)\subseteq S$, thus $s\ge \delta$. Let $G_1=K_s \vee (K_{n-2s-k}+(s+k)K_1)$. Obviously, $G$ is a spanning subgraph of $G_1$. By Lemma \ref{le:8}, we have
	\begin{equation}\label{eq:1}
		\mu(G)\ge \mu(G_1),
	\end{equation}
with the equality holds if and only if $G\cong G_1$. Since $\delta \le s\le \frac{n-k}{2}$, we will discuss the proof into two cases according to the value of $s$.
	
	{\bf Case 1.} $s=\delta$.
	
	In this case, we konw that $G_1=K_{\delta} \vee (K_{n-2\delta-k}+(\delta+k)K_1)$. By (\ref{eq:1}), we have
	$$\mu(G)\ge\mu(G_1)=\mu(K_{\delta} \vee (K_{n-2\delta-k}+(\delta+k)K_1)).$$
What's more, according to the assumed condition $\mu(G)\le \mu(K_{\delta} \vee (K_{n-2\delta-k}+(\delta+k)K_1))$ and $G$ is a spanning subgraph of $K_{\delta} \vee (K_{n-2\delta-k}+(\delta+k)K_1)$, we conclude that $G\cong K_{\delta} \vee (K_{n-2\delta-k}+(\delta+k)K_1)$. Obviously, deleting vertices in $K_{n-2\delta-k}$ and $(\delta+k)K_1$ does not cause a graph $G$ to become disconnected. Hence,
	$$\max\{i(G-S)-|S|:\forall S \subseteq V(G)\}=i(G-V(K_{\delta}))-|V(K_{\delta})|=k.$$
Thus, with the help of Lemma \ref{le:2}, it can be concluded that $\alpha_f(G)=\frac{n-k}{2}$.
Hence, if $s=\delta$, then $G\cong K_{\delta} \vee (K_{n-2\delta-k}+(\delta+k)K_1)$.
	
	{\bf Case 2.} $\delta<s\le \frac{n-k}{2}.$
	
	Let $G_1=K_{s} \vee (K_{n-2s-k}+(s+k)K_1)$. We divide $V(G_1)$ into three parts: $V(K_s)$, $V(K_{n-2s-k})$ and $V((s+k)K_1)$. Then the distance matrix of $G_1$, denoted by $D(G_1)$, is
	\begin{equation*}
		\begin{bmatrix}
			(J-I)_{s\times s} & J_{s\times (n-2s-k)} & J_{s\times (s+k)} \\
			J_{(n-2s-k)\times s}& (J-I)_{(n-2s-k)\times (n-2s-k)} & 2J_{(n-2s-k)\times (s+k)}\\
			J_{(s+k)\times s}& {2J}_{(s+k)\times (n-2s-k)} & 2(J-I)_{(s+k)\times (s+k)}
		\end{bmatrix},
	\end{equation*}
where $J_{n\times m}$ denotes the $n\times m$ all-one matrix and $I_{n\times n}$ denotes the $n\times n$ identity square matrix. Then the equitable quotient matrix of the distance matrix $D(G_1)$, denoted by $M_s$, for the partition $V(K_s)\cup V(K_{n-2s-k})\cup V((s+k)K_1)$ is
	\begin{equation*}
		M_s=\begin{bmatrix}
			s-1 & n-2s-k & s+k \\
			s & n-2s-k-1 & 2(s+k) \\
			s & 2(n-2s-k) & 2(s+k-1)
		\end{bmatrix},
	\end{equation*}
and the characteristic polynomial of $M_s$ is
\begin{align*}
f_s(x)&=x^3+(-s-n-k+4)x^2+(5s^2-2ns+7ks-s-2kn-3n+2k^2-k+5)x\\
	& \ \ \ -2s^3+(n-3k+5)s^2+(kn-2n-k^2+7k)s-2kn-2n+2k^2+2.
\end{align*}	
Let $\lambda_1(M_s)$ denote the largest real root of the equation $f_s(x)=0$. By Lemma \ref{le:10}, we have $\mu(G_1)=\lambda_1(M_s)$. Obviously, by replacing $s$ with $\delta$, we can get the equitable quotient matrix $M_{\delta}$ of $G_2=K_{\delta} \vee (K_{n-2\delta-k}+(\delta+k)K_1)$. Similarly, we can get the characteristic polynomial $f_{\delta}(x)$ of $M_{\delta}$ and $\mu(G_2)=\lambda_1(M_{\delta})$ is the largest real root of the equation $f_{\delta}(x)=0$. Then we have
	\begin{align*}
f_s(x)-f_{\delta}(x)&=(\delta-s)[x^2+(2n+1-5(\delta+s)-7k)x+(2s-n+2\delta+3k-5)s\\
&\ \ \ \ +(2\delta-n+3k-5)\delta+2n-kn+k^2-7k].
\end{align*}	
Since $G_1$ and $G_2$ are spanning subgraphs of $K_n$, by Lemma \ref{le:8}, $\mu(G_1)>\mu(K_{n})=n-1$ and $\mu(G_2)>\mu(K_{n})=n-1$. Then we will prove that $f_s(x)-f_{\delta}(x)<0$ for $x\in[n-1,+\infty)$. Since $\delta<s$, we only need to show $g(x)>0$ for $x\in[n-1,+\infty)$, where
	$$g(x)=x^2+(2n+1-5(\delta+s)-7k)x+(2s-n+2\delta+3k-5)s+(2\delta-n+3k-5)\delta+2n-kn+k^2-7k.$$
Since $s\leq\frac{n-k}{2}$, the symmetry axis of $g(x)$ is
	\begin{align*}
		\tilde{x}&=\frac{5(\delta+s)+7k-2n-1}{2}\\
		&=\frac{5}{2}s+\frac{5}{2}\delta+\frac{7}{2}k-n-\frac{1}{2}\\
		&\le\frac{5}{4}(n-k)+\frac{5}{2}\delta+\frac{7}{2}k-n-\frac{1}{2}\\
		&=\frac{1}{4}n+\frac{5}{2}\delta+\frac{9}{4}k-\frac{1}{2}.
	\end{align*}
	
Note that $n\ge9k+10\delta+2>3k+\frac{10}{3}\delta+\frac{2}{3}$ and $n>3k+\frac{10}{3}\delta+\frac{2}{3}\iff \frac{1}{4}n+\frac{5}{2}\delta+\frac{9}{4}k-\frac{1}{2}<n-1$, we have $\frac{5(\delta+s)+7k-2n-1}{2}<n-1$, which implies $g(x)$ is increasing with respect to $x\in[n-1,+\infty)$. Then
	$$g(x)\ge g(n-1)=2s^2+(3k+2\delta-6n)s+3n^2+2\delta^2+(3k-6n)\delta-n-8kn+k^2.$$
Let
$$h(s)=g(n-1)=2s^2+(3k+2\delta-6n)s+3n^2+2\delta^2+(3k-6n)\delta-n-8kn+k^2.$$
Recall that $\delta<s\le \frac{n-k}{2}$, then
	\begin{align*}
		\frac{dh}{ds}&=4s+3k+2\delta-6n\\
		&\le2n-2k+3k+2\delta-6n\\
		&=-4n+k+2\delta<0.
	\end{align*}
Hence $h(s)$ is decreasing with respect to $s\in[\delta+1,\frac{n-k}{2}]$. By a calculation,
\begin{align*}
	h(s)\ge h(\frac{n-k}{2})&=\frac{1}{2}[n^2-(10\delta+9k+2)n+4\delta^2+4k\delta]\\
	&>\frac{1}{2}[n^2-(10\delta+9k+2)n].
\end{align*}
Since $n\ge9k+10\delta+2$, we have $h(s)>0$ and $g(x)\ge g(n-1)=h(s)>0$, which implies $f_s(x)<f_{\delta}(x)$ for $x\in[n-1,+\infty)$. Since $\min\{\mu(G_1),\mu(G_2)\}>n-1$, it can be concluded that $\mu(G_1)>\mu(G_2)$. Thus
$$\mu(G)\ge\mu(G_1)>\mu(G_2)=\mu(K_{\delta} \vee (K_{n-2\delta-k}+(\delta+k)K_1),$$
a contradiction. This completes the proof.
\end{proof}

Let $k=1$, we can get a distance spectral radius condition in a graph $G$ of order $n$ with minimum degree $\delta$ such that $\alpha_f(G)>\frac{n-1}{2}$. However, by Theorem \ref{le:1}, we have $\alpha_f(G)\le\frac{n}{2}$ and $2\alpha_f(G) $ is an integer. Thus, we can obtain the following corollary about the fractional perfect matching based on the distance spectral radius.

\noindent\begin{corollary}\label{C1.3}  \ Let $G$ be a connected graph of order $n\ge11+10\delta$ with minimum degree $\delta$. If $\mu(G)\le \mu(K_{\delta} \vee (K_{n-2\delta-1}+(\delta+1)K_1))$, then $G$ contains a fractional perfect matching unless $G\cong K_{\delta} \vee (K_{n-2\delta-1}+(\delta+1)K_1).$
\end{corollary}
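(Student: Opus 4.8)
The plan is to deduce this statement directly from Theorem~\ref{T1.2} together with the integrality part of Theorem~\ref{le:1}. First I would specialize Theorem~\ref{T1.2} to $k=1$: the order hypothesis there, $n>9k+10\delta+2$, becomes for $k=1$ exactly $n>11+10\delta$, which is the hypothesis at hand, and the extremal graph $K_{\delta}\vee(K_{n-2\delta-k}+(\delta+k)K_1)$ becomes $K_{\delta}\vee(K_{n-2\delta-1}+(\delta+1)K_1)$. Hence, under the assumption $\mu(G)\le\mu(K_{\delta}\vee(K_{n-2\delta-1}+(\delta+1)K_1))$, Theorem~\ref{T1.2} gives $\alpha_f(G)>\frac{n-1}{2}$ unless $G\cong K_{\delta}\vee(K_{n-2\delta-1}+(\delta+1)K_1)$.

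Next I would upgrade the strict inequality $\alpha_f(G)>\frac{n-1}{2}$ to a fractional perfect matching. By Theorem~\ref{le:1}(b), $2\alpha_f(G)$ is an integer, and $2\alpha_f(G)>n-1$ then forces $2\alpha_f(G)\ge n$; combining this with the bound $2\alpha_f(G)\le n$ from Theorem~\ref{le:1}(a) yields $2\alpha_f(G)=n$, i.e.\ $\alpha_f(G)=\frac{n}{2}$, so by definition $G$ has a fractional perfect matching. Putting the two steps together gives the corollary.

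There is essentially no obstacle here: all the substantive work lives in the proof of Theorem~\ref{T1.2}, and the only additional ingredient is the boundedness and parity observation of Theorem~\ref{le:1}, which is immediate. The single point worth double-checking is that the order hypothesis of Corollary~\ref{C1.3} coincides with that of Theorem~\ref{T1.2} after substituting $k=1$ — and it matches exactly, so no further computation is required.
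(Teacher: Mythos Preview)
Your proposal is correct and matches the paper's own argument essentially verbatim: the paper also specializes Theorem~\ref{T1.2} to $k=1$ and then invokes Theorem~\ref{le:1} (the bound $\alpha_f(G)\le \frac{n}{2}$ together with the integrality of $2\alpha_f(G)$) to upgrade $\alpha_f(G)>\frac{n-1}{2}$ to $\alpha_f(G)=\frac{n}{2}$.
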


\section{$\{K_2, \{C_k\}\}$-factor, $\{K_{1,1},K_{1,2},\dots,K_{1,k}\}$-factor and distance spectral radius}

\quad\quad Tutte \cite{T} gave a sufficient and necessary condition for a graph contains a $\{K_2, \{C_k\}\}$-factor with $3\le k<+\infty$. In this section, we firstly obtain a distance spectral radius condition that makes a graph $G$ with given minimum degree $\delta$ has a $\{K_2, \{C_k\}\}$-factor.

\noindent\begin{theorem}\label{T1.4}  \ Let $G$ be a connected graph of order $n\ge11+10\delta$ with minimum degree $\delta$, where $3\le k<+\infty$ is an integer. If $\mu(G)\le \mu(K_{\delta} \vee (K_{n-2\delta-1}+(\delta+1)K_1))$, then $G$ has a $\{K_2,\{C_k\}\}$-factor unless $G\cong K_{\delta} \vee (K_{n-2\delta-1}+(\delta+1)K_1)$. (see Figure 2)
\end{theorem}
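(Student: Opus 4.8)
The plan is to reduce Theorem \ref{T1.4} to Theorem \ref{T1.2} via the combinatorial characterizations in Lemmas \ref{le:2} and \ref{le:3}. First I would argue by contradiction: suppose $G$ has no $\{K_2,\{C_k\}\}$-factor. By Lemma \ref{le:3}, there exists a vertex subset $S\subseteq V(G)$ with $i(G-S)>|S|$, i.e. $i(G-S)\ge |S|+1$. The key observation is that this says exactly $\max\{i(G-S)-|S|:S\subseteq V(G)\}\ge 1$, so by Lemma \ref{le:2} we get $\alpha_f(G)=\frac12\bigl(n-\max\{i(G-S)-|S|\}\bigr)\le \frac{n-1}{2}$. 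In other words, failure of the $\{K_2,\{C_k\}\}$-factor forces $\alpha_f(G)\le\frac{n-1}{2}$, which is precisely the hypothesis $\alpha_f(G)\le\frac{n-k}{2}$ of Theorem \ref{T1.2} in the special case $k=1$.

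Next I would invoke Theorem \ref{T1.2} (or equivalently Corollary \ref{C1.3}) with parameter $k=1$: since $n>11+10\delta$ matches the order hypothesis $n>9\cdot 1+10\delta+2$ there, and since $\mu(G)\le \mu(K_{\delta}\vee(K_{n-2\delta-1}+(\delta+1)K_1))$ is exactly the spectral hypothesis with $k=1$, we conclude that either $\alpha_f(G)>\frac{n-1}{2}$ or $G\cong K_{\delta}\vee(K_{n-2\delta-1}+(\delta+1)K_1)$. The first alternative contradicts $\alpha_f(G)\le\frac{n-1}{2}$, so we must have $G\cong K_{\delta}\vee(K_{n-2\delta-1}+(\delta+1)K_1)$. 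To close the argument I would check that this extremal graph itself contains no $\{K_2,\{C_k\}\}$-factor: taking $S=V(K_\delta)$, the graph $G-S$ has the $\delta+1$ vertices of $(\delta+1)K_1$ as isolated vertices (they were joined only to $K_\delta$), so $i(G-S)=\delta+1>\delta=|S|$, violating the Tutte condition of Lemma \ref{le:3}. Hence the only graph satisfying the spectral bound without a $\{K_2,\{C_k\}\}$-factor is the stated extremal graph, completing the proof.

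I do not expect a genuine obstacle here: the proof is essentially a dictionary translation between "$\{K_2,\{C_k\}\}$-factor" (Lemma \ref{le:3}) and "fractional matching number $>\frac{n-1}{2}$" (Lemma \ref{le:2}), both of which are governed by the same quantity $\max\{i(G-S)-|S|\}$, reducing everything to the already-proved Theorem \ref{T1.2}/Corollary \ref{C1.3} with $k=1$. The one point requiring a line of care is that the threshold condition $n>11+10\delta$ in the statement indeed coincides with the instance $n>9k+10\delta+2$ of Theorem \ref{T1.2} at $k=1$, and that the value of $k\ (\ge 3)$ appearing in the $\{K_2,\{C_k\}\}$-factor plays no role in the bound — it only affects which cycle lengths are allowed as components, not the Tutte inequality $i(G-S)\le|S|$. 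If one preferred a self-contained argument one could instead re-run the estimate of Theorem \ref{T1.2} directly with the subset $S$ provided by Lemma \ref{le:3}, but reusing Theorem \ref{T1.2} is cleaner and avoids repeating the polynomial computations with $M_s$ and $M_\delta$.
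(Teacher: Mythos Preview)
Your reduction is correct: the failure of a $\{K_2,\{C_k\}\}$-factor gives, via Lemma~\ref{le:3}, some $S$ with $i(G-S)\ge|S|+1$, hence $\alpha_f(G)\le\tfrac{n-1}{2}$ by Lemma~\ref{le:2}, and then Theorem~\ref{T1.2} (equivalently Corollary~\ref{C1.3}) with parameter $k=1$ forces $G$ to be the extremal graph; your verification that this graph violates Tutte's condition at $S=V(K_\delta)$ is fine. The paper, however, does \emph{not} invoke Theorem~\ref{T1.2}: it re-runs the entire argument from scratch, setting up the spanning supergraph $G_1=K_s\vee(K_{n-2s-1}+(s+1)K_1)$, writing out the equitable quotient matrix $M_s$, computing its characteristic polynomial $f_s(x)$, and comparing $f_s$ with $f_\delta$ on $[n-1,\infty)$ exactly as in the proof of Theorem~\ref{T1.2} specialized to $k=1$. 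Your route is genuinely shorter and avoids duplicating those polynomial computations; the paper's route is self-contained (Theorem~\ref{T1.4} then does not logically depend on Theorem~\ref{T1.2}) but at the cost of repeating essentially the same calculation verbatim. Either way the mathematical content is identical, since the Tutte criterion $i(G-S)\le|S|$ and the fractional-matching bound $\alpha_f(G)>\tfrac{n-1}{2}$ are governed by the same quantity.
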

\begin{figure}[H]
	\begin{centering}
		\includegraphics[scale=1]{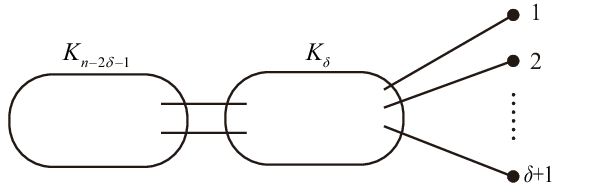}
		\caption{The extremal graph of Theorem 4.1.}\label{Fig.2.}
	\end{centering}
\end{figure}
\begin{proof}  Let $G$ be a graph of order $n\ge11+10\delta$ and minimum degree $\delta$. By the way of contradiction, we assume that $G$ has no a $\{K_2,\{C_k\}\}$-factor. Then by Lemma \ref{le:3}, there
exists a vertex subset $S\subseteq V(G)$ such that $i(G-S)-|S|\ge 1$. The set of isolated vertices in $G-S$ denoted by $T$. Let $|S|=s$ and $|T|=t$. Then we have $t=i(G-S)\ge s+1$. Since $s+t\le n$, we have $s\le\frac{n-1}{2}$. It is easy to find that $N_G(T)\subseteq S$, thus $s\ge \delta$. Let $G_1=K_s \vee (K_{n-2s-1}+(s+1)K_1)$. Obviously, $G$ is a spanning subgraph of $G_1$. By Lemma \ref{le:8}, we have
\begin{equation}\label{eq2}
	\mu(G)\ge \mu(G_1),
\end{equation}
where equality holds if and only if $G\cong G_1$. Recall that $\delta \le s\le \frac{n-1}{2}$, we will divide the proof into two cases according to the value of $s$.

{\bf Case 1.} $s=\delta$.

In this case, we have $G_1=K_{\delta} \vee (K_{n-2\delta-1}+(\delta+1)K_1)$. By (\ref{eq2}), it can be concluded that
$$\mu(G)\ge\mu(G_1)=\mu(K_{\delta} \vee (K_{n-2\delta-1}+(\delta+1)K_1)).$$
Moreover, according to the assumed condition $\mu(G)\le \mu(K_{\delta} \vee (K_{n-2\delta-1}+(\delta+1)K_1))$ and $G$ is a spanning subgraph of $K_{\delta} \vee (K_{n-2\delta-1}+(\delta+1)K_1)$, we conclude that $G\cong K_{\delta} \vee (K_{n-2\delta-1}+(\delta+1)K_1)$. Note that
the vertices of $(\delta+1)K_1$ are only adjacent to $\delta$ vertices of $K_{\delta}$. Then $K_{\delta} \vee (K_{n-2\delta-1}+(\delta+1)K_1)$ does not contain a spanning subgraph where each component is a member of $\{K_2,\{C_k\}\}$. Hence $K_{\delta} \vee (K_{n-2\delta-1}+(\delta+1)K_1)$ does not have a $\{K_2,\{C_k\}\}$-factor.

{\bf Case 2.} $\delta<s\le \frac{n-1}{2}$.

Let $G_1=K_{s} \vee (K_{n-2s-1}+(s+1)K_1$. We divide $V(G_1)$ into three parts: $V(K_s)$, $V(K_{n-2s-1})$ and $V((s+1)K_1)$. We use $M_s$ to denote the equitable quotient matrix of the distance matrix $D(G_1)$ for the partition $V(K_s)\cup V(K_{n-2s-1})\cup V((s+1)K_1)$ of $V(G_1)$. Then
\begin{equation*}
	M_s=\begin{bmatrix}
		s-1 & n-2s-1 & s+1 \\
		s & n-2s-2 & 2(s+1) \\
		s & 2(n-2s-1) & 2s
	\end{bmatrix}.
\end{equation*}
By direct calculation, the characteristic polynomial of $M_s$ is
$$f_s(x)=x^3+(3-s-n)x^2+(5s^2-2ns+6s-5n+6)x-2s^3+(n+2)s^2+(6-n)s-4n+4.$$
Let $\lambda_1(M_s)$ denote the largest real root of the equation $f_s(x)=0$. By Lemma \ref{le:10}, we have $\mu(G_1)=\lambda_1(M_s)$. Obviously, by replacing $s$ with $\delta$, we can get the equitable quotient matrix $M_{\delta}$ of $G_2=K_{\delta} \vee (K_{n-2\delta-1}+(\delta+1)K_1)$. Similarly, we can get the characteristic polynomial $f_{\delta}(x)$ of $M_{\delta}$ and $\mu(G_2)=\lambda_1(M_{\delta})$ is the largest real root of the equation $f_{\delta}(x)=0$. Then
$$f_s(x)-f_{\delta}(x)=(\delta-s)[x^2+(2n-5(\delta+s)-6)x+(2s-n+2\delta-2)s+(2\delta-n-2)\delta+n-6].$$
Since $G_1$ and $G_2$ are spanning subgraphs of $K_n$, by Lemma \ref{le:8}, we obtain $\mu(G_1)>\mu(K_{n})=n-1$ and $\mu(G_2)>\mu(K_{n})=n-1$.
Then we will prove that $f_s(x)-f_{\delta}(x)<0$ for $x\in[n-1,+\infty)$. Since $\delta<s$, we only need to prove $g(x)>0$ for $x\in[n-1,+\infty)$, where
$$g(x)=x^2+(2n-5(\delta+s)-6)x+(2s-n+2\delta-2)s+(2\delta-n-2)\delta+n-6.$$
Note that $s\le \frac{n-1}{2}$, the symmetry axis of $g(x)$ is
\begin{align*}
	\tilde{x}&=\frac{5(\delta+s)+6-2n}{2}\\
	&=\frac{5}{2}s+\frac{5}{2}\delta+\frac{7}{2}-n-\frac{1}{2}\\
	&\le\frac{5}{4}(n-1)+\frac{5}{2}\delta+\frac{7}{2}-n-\frac{1}{2}\\
	&=\frac{1}{4}n+\frac{5}{2}\delta+\frac{7}{4}.
\end{align*}
Recall that $n\ge11+10\delta>\frac{10}{3}\delta+\frac{11}{3}$ and $n>\frac{10}{3}\delta+\frac{11}{3}\iff \frac{1}{4}n+\frac{5}{2}\delta+\frac{7}{4}<n-1$, we have $\frac{5(\delta+s)+6-2n}{2}<n-1$, which implies $g(x)$ is increasing with respect to $x\in[n-1,+\infty)$. Then
$$g(x)\ge g(n-1)=2s^2+(3+2\delta-6n)s+3n^2+2\delta^2+(3-6n)\delta-9n+1.$$
Let
$$h(s)=g(n-1)=2s^2+(3+2\delta-6n)s+3n^2+2\delta^2+(3-6n)\delta-9n+1.$$
Note that $\delta<s\le \frac{n-1}{2}$, we have
\begin{align*}
	\frac{dh}{ds}&=4s+3+2\delta-6n\\
	&\le2n-2+3+2\delta-6n\\
	&=-4n+1+2\delta<0.
\end{align*}
Hence $h(s)$ is decreasing with respect to $s\in[\delta+1,\frac{n-1}{2}]$. Then
\begin{align*}
	h(s)\ge h(\frac{n-1}{2})&=\frac{1}{2}[n^2-(10\delta+11)n+4\delta^2+4\delta]\\
	&>\frac{1}{2}[n^2-(10\delta+11)n].
\end{align*}
Since $n\ge11+10\delta$, we have $h(s)>0$. Hence $g(x)\ge g(n-1)=h(s)>0$, which implies $f_s(x)<f_{\delta}(x)$ for  $x\in[n-1,+\infty)$. Since $\min\{\mu(G_1),\mu(G_2)\}>n-1$, it can be concluded that $\mu(G_1)>\mu(G_2)$. Thus
$$\mu(G)\ge\mu(G_1)>\mu(G_2)=\mu(K_{\delta} \vee (K_{n-2\delta-1}+(\delta+1)K_1),$$
a contradiction. This completes the proof.
\end{proof}

In the following, we will present a distance spectral radius condition to guarantee the existence of $\{K_{1,1},K_{1,2},\dots,K_{1,k}\}$-factor in a graph $G$ with given minimum degree $\delta$.

\noindent\begin{theorem}\label{T1.5}  \ Let $G$ be a connected graph of order $n\ge\frac{3+5k}{k^2}+3+(\frac{3}{k}+5+2k)\delta$ with minimum degree $\delta$, where $2\le k<+\infty$ is an integer. If $\mu(G)\le \mu(K_{\delta} \vee (K_{n-k\delta-\delta-1}+(k\delta+1)K_1))$, then $G$ has a $\{K_{1,1},K_{1,2},\dots,K_{1,k}\}$-factor unless $G\cong K_{\delta} \vee (K_{n-k\delta-\delta-1}+(k\delta+1)K_1)$. (see Figure 3)
\end{theorem}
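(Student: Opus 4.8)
\textbf{Proof proposal for Theorem \ref{T1.5}.}

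The plan is to mimic the structure of the proof of Theorem \ref{T1.4}, replacing the Tutte condition by the Amahashi--Kano condition (Lemma \ref{le:4}). First I would argue by contradiction: if $G$ has no $\{K_{1,1},\dots,K_{1,k}\}$-factor, then by Lemma \ref{le:4} there is a set $S\subseteq V(G)$ with $i(G-S)>k|S|$, i.e. $i(G-S)\ge k|S|+1$. Writing $s=|S|$ and letting $T$ be the set of isolated vertices of $G-S$, we get $|T|\ge ks+1$, and since $N_G(T)\subseteq S$ we get $s\ge\delta$. From $s+|T|\le n$ we get $ks+1\le n-s$, hence $s\le\frac{n-1}{k+1}$. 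Then $G$ is a spanning subgraph of $G_1:=K_s\vee\bigl(K_{n-(k+1)s-1}+(ks+1)K_1\bigr)$, so by Lemma \ref{le:8} we have $\mu(G)\ge\mu(G_1)$ with equality iff $G\cong G_1$.

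Next, I would split into the two cases $s=\delta$ and $\delta<s\le\frac{n-1}{k+1}$, exactly as before. In Case 1, $G_1=K_\delta\vee(K_{n-k\delta-\delta-1}+(k\delta+1)K_1)$; combining $\mu(G)\ge\mu(G_1)$ with the hypothesis $\mu(G)\le\mu(G_1)$ and the spanning-subgraph relation forces $G\cong G_1$, and one checks directly from Lemma \ref{le:4} (the $k\delta+1$ isolated vertices have all their neighbours inside $V(K_\delta)$, so $i(G-V(K_\delta))=k\delta+1>k\delta=k|V(K_\delta)|$) that this graph genuinely has no such factor, which is the stated exceptional case. For Case 2, write $M_s$ for the equitable quotient matrix of $D(G_1)$ with respect to the partition $V(K_s)\cup V(K_{n-(k+1)s-1})\cup V((ks+1)K_1)$, namely
\begin{equation*}
M_s=\begin{bmatrix}
s-1 & n-(k+1)s-1 & ks+1 \\
s & n-(k+1)s-2 & 2(ks+1) \\
s & 2(n-(k+1)s-1) & 2ks
\end{bmatrix};
\end{equation*}
by Lemma \ref{le:10}, $\mu(G_1)=\lambda_1(M_s)$, the largest root of its characteristic polynomial $f_s(x)$. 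Setting $G_2=K_\delta\vee(K_{n-k\delta-\delta-1}+(k\delta+1)K_1)$ with quotient matrix $M_\delta$ and polynomial $f_\delta(x)$, I would compute $f_s(x)-f_\delta(x)$ and factor out $(\delta-s)$, leaving a quadratic $g(x)$ in $x$ whose coefficients depend on $n,k,\delta,s$. Since both $\mu(G_1)$ and $\mu(G_2)$ exceed $\mu(K_n)=n-1$ by Lemma \ref{le:8}, it suffices to show $g(x)>0$ on $[n-1,+\infty)$: first check the vertex of the parabola lies to the left of $n-1$ using $s\le\frac{n-1}{k+1}$ together with the hypothesis on $n$, so $g$ is increasing there; then evaluate $g(n-1)$, view it as a quadratic $h(s)$ in $s$, show $h$ is decreasing on $[\delta+1,\frac{n-1}{k+1}]$, and finally bound $h\bigl(\tfrac{n-1}{k+1}\bigr)$ from below, which the stated lower bound on $n$ (of the form $n>\frac{3+5k}{k^2}+3+(\frac{3}{k}+5+2k)\delta$) is precisely engineered to make positive. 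This gives $f_s(x)<f_\delta(x)$ on $[n-1,+\infty)$, hence $\mu(G_1)>\mu(G_2)$, so $\mu(G)\ge\mu(G_1)>\mu(G_2)=\mu(K_\delta\vee(K_{n-k\delta-\delta-1}+(k\delta+1)K_1))$, contradicting the hypothesis.

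The main obstacle I anticipate is purely computational rather than conceptual: getting the characteristic polynomial $f_s(x)$ of the $3\times3$ matrix $M_s$ correct with the extra parameter $k$ throughout, and then tracking the quadratics $g(x)$ and $h(s)$ carefully enough to see that the threshold on $n$ in the statement is exactly what is needed. In particular the monotonicity checks (the vertex of $g$ lying left of $n-1$, and $\frac{dh}{ds}<0$ on the relevant range) must be verified with the $k$-dependent bound on $s$, and the final inequality $h\bigl(\tfrac{n-1}{k+1}\bigr)>0$ should be shown to reduce to the given hypothesis on $n$; all of this is routine algebra but error-prone, so I would double-check each coefficient against the $k=1$ specialization, where it must collapse to the polynomial appearing in the proof of Theorem \ref{T1.4}.
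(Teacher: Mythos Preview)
Your proposal is correct and follows essentially the same approach as the paper's own proof: the same contradiction via Lemma \ref{le:4}, the same extremal spanning supergraph $G_1=K_s\vee(K_{n-(k+1)s-1}+(ks+1)K_1)$, the same $3\times3$ equitable quotient matrix $M_s$, and the same comparison of characteristic polynomials via the quadratic $g(x)$ and then $h(s)=g(n-1)$. Your verification in Case~1 that the exceptional graph has no star factor (via $i(G-V(K_\delta))=k\delta+1>k\delta$) is in fact cleaner than the paper's wording, but the overall strategy and all the key inequalities are identical.
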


\begin{figure}[H]
	\begin{centering}
		\includegraphics[scale=1]{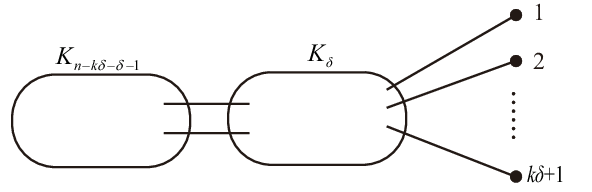}
		\caption{The extremal graph of Theorem 4.2.}\label{Fig.3.}
	\end{centering}
\end{figure}

\begin{proof}  Let $G$ be a graph of order $n\ge\frac{3+5k}{k^2}+3+(\frac{3}{k}+5+2k)\delta$ and minimum degree $\delta$. By the way of contradiction, we assume that $G$ has no a $\{K_{1,1},K_{1,2},\dots,K_{1,k}\}$-factor. Then by Lemma \ref{le:4}, there exists a vertex subset $S\subseteq V(G)$ such that $i(G-S)-k|S|\ge 1$. The set of isolated vertices in $G-S$ denoted by $T$. Let $|S|=s$ and $|T|=t$. Then $t=i(G-S)\ge ks+1$. Since $s+t\le n$, we have $s\le\frac{n-1}{k+1}$. It is easy to find that $N_G(T)\subseteq S$, thus $s\ge \delta$. Let $G_1=K_s \vee (K_{n-ks-s-1}+(ks+1)K_1)$. Obviously, $G$ is a spanning subgraph of $G_1$. By Lemma \ref{le:8}, we obtain
	\begin{equation}\label{eq:2}
		\mu(G)\ge \mu(G_1),
	\end{equation}
with the equality holds if and only if $G\cong G_1$. Since $\delta \le s\le \frac{n-1}{k+1}$, we will discuss the proof in two ways according to the value of $s$.
	
	{\bf Case 1.} $s=\delta$.
	
	In this case, we have $G_1=K_{\delta} \vee (K_{n-k\delta-\delta-1}+(k\delta+1)K_1)$. By (\ref{eq:2}), it can be concluded that
	$$\mu(G)\ge\mu(G_1)=\mu(K_{\delta} \vee (K_{n-k\delta-\delta-1}+(k\delta+1)K_1)).$$
Furthermore, according to the assumed condition $\mu(G)\le \mu(K_{\delta} \vee (K_{n-k\delta-\delta-1}+(k\delta+1)K_1))$ and $G$ is a spanning subgraph of $K_{\delta} \vee (K_{n-k\delta-\delta-1}+(k\delta+1)K_1)$, we conclude that $G\cong K_{\delta} \vee (K_{n-k\delta-\delta-1}+(k\delta+1)K_1)$. Note that
the vertices of $(k\delta+1)K_1$ are only adjacent to $\delta$ vertices of $K_{\delta}$. Then $K_{\delta} \vee (K_{n-k\delta-\delta-1}+(k\delta+1)K_1)$ does not contain a spanning subgraph where each component is a member of $\{K_{1,1},K_{1,2},\dots,K_{1,k}\}$. This implies that $K_{\delta} \vee (K_{n-k\delta-\delta-1}+(k\delta+1)K_1)$ does not have a $\{K_{1,1},K_{1,2},\dots,K_{1,k}\}$-factor.
	
	{\bf Case 2.} $\delta<s\le \frac{n-1}{k+1}.$
	
	Let $G_1=K_s \vee (K_{n-ks-s-1}+(ks+1)K_1)$. We divide $V(G_1)$ into three parts: $V(K_s)$, $V(K_{n-ks-s-1})$, $V((ks+1)K_1)$. Then the equitable quotient matrix of $D(G_1)$, denoted by $M_s$, for the
	partition $V(K_s)\cup V(K_{n-ks-s-1})\cup V((ks+1)K_1)$ of $V(G_1)$ is
	\begin{equation*}
		M_s=\begin{bmatrix}
			s-1 & n-ks-s-1 & ks+1 \\
			s & n-ks-s-2 & 2(ks+1) \\
			s & 2(n-ks-s-1) & 2ks
		\end{bmatrix},
	\end{equation*}
and the characteristic polynomial of $M_s$ is
\begin{align*}
f_s(x)&=x^3+(-ks-n+3)x^2+(2k^2s^2+3ks^2-2kns+3ks+3s-5n+6)x-(k^2+k)s^3\\
	&\ \ \ \ +(kn+2k^2+k-1)s^2+(n-2kn+4k+2)s+4-4n.
\end{align*}
Let $\lambda_1(M_s)$ denote the largest real root of the equation $f_s(x)=0$. By Lemma \ref{le:10}, we have $\mu(G_1)=\lambda_1(M_s)$. Obviously, by replacing $s$ with $\delta$, we can get the equitable quotient matrix $M_{\delta}$ of $G_2=K_{\delta} \vee (K_{n-k\delta-\delta-1}+(k\delta+1)K_1)$. Similarly, we can get the characteristic polynomial $f_{\delta}(x)$ of $M_{\delta}$ and $\mu(G_2)=\lambda_1(M_{\delta})$ is the largest real root of the equation $f_{\delta}(x)=0$. Then
\begin{align*}
f_s(x)-f_{\delta}(x)&=(\delta-s)[kx^2+(-2k^2s-3ks+2kn-2k^2\delta-3k\delta-3k-3)x\\
&\ \ \ +(k^2s+ks-kn+k^2\delta+k\delta-2k^2-k+1)s\\
&\ \ \ +(-kn+k^2\delta+k\delta-2k^2-k+1)\delta-n+2kn-4k-2].
\end{align*}	
Since $G_1$ and $G_2$ are spanning subgraphs of $K_n$, by Lemma \ref{le:8}, we have $\mu(G_1)>\mu(K_{n})=n-1$ and $\mu(G_2)>\mu(K_{n})=n-1$. Then we will prove that $f_s(x)-f_{\delta}(x)<0$ for $x\in[n-1,+\infty)$. Since $\delta<s$, we only need to prove $g(x)>0$ for $x\in[n-1,+\infty)$, where
\begin{align*}
g(x)&=kx^2+(-2k^2s-3ks+2kn-2k^2\delta-3k\delta-3k-3)x\\
&\ \ \ +(k^2s+ks-kn+k^2\delta+k\delta-2k^2-k+1)s\\
& \ \ \  +(-kn+k^2\delta+k\delta-2k^2-k+1)\delta-n+2kn-4k-2.
\end{align*}
Note that $s\le \frac{n-1}{k+1}$ and $k\geq2$, the symmetry axis of $g(x)$ is
	\begin{align*}
	\tilde{x}&=-\frac{(-2k^2s-3ks+2kn-2k^2\delta-3k\delta-3k-3)}{2k}\\
	&=\frac{3}{2k}+\frac{3}{2}+(\frac{3}{2}+k)\delta+\frac{3}{2}s+ks-n\\
	&\le\frac{9}{4}+(\frac{3}{2}+k)\delta+(\frac{3}{2}+k)s-n\\
	&\le\frac{9}{4}+(\frac{3}{2}+k)\delta+(\frac{3}{2}+k)\frac{n-1}{k+1}-n\\
	&=\frac{5}{4}+(\frac{3}{2}+k)\delta+\frac{1}{2(k+1)}n-\frac{1}{2(k+1)}.
\end{align*}
Since $n\ge\frac{3+5k}{k^2}+3+(\frac{3}{k}+5+2k)\delta>\frac{1}{2k+1}[\frac{9}{2}(k+1)-1]+(\frac{3}{2}+k)\frac{2(k+1)}{2k+1}\delta$ and $n>\frac{1}{2k+1}[\frac{9}{2}(k+1)-1]+(\frac{3}{2}+k)\frac{2(k+1)}{2k+1}\delta\iff \frac{5}{4}+(\frac{3}{2}+k)\delta+\frac{1}{2(k+1)}n-\frac{1}{2(k+1)}<n-1$, we have $-\frac{(-2k^2s-3ks+2kn-2k^2\delta-3k\delta-3k-3)}{2k}<n-1$, which implies $g(x)$ is increasing with respect to $x\in[n-1,+\infty)$. Then
	\begin{align*}
		g(x)\ge g(n-1)&=(k^2+k)s^2+(1+2k+k\delta+k^2\delta-4kn-2k^2n)s+3kn^2\\
		& \ \ \ +k(k+1)\delta^2+(1+2k-4kn-2k^2n)\delta-4n-5kn+1.
	\end{align*}
Let
	\begin{align*}
		h(s)= g(n-1)&=(k^2+k)s^2+(1+2k+k\delta+k^2\delta-4kn-2k^2n)s+3kn^2\\
		& \ \ \ +k(k+1)\delta^2+(1+2k-4kn-2k^2n)\delta-4n-5kn+1.
	\end{align*}
Note that $\delta<s\le \frac{n-1}{k+1}$ and $n>\frac{3+5k}{k^2}+3+(\frac{3}{k}+5+2k)\delta$, we have
	\begin{align*}
		\frac{dh}{ds}&=(2k^2+2k)s+1+2k+k\delta+k^2\delta-4kn-2k^2n\\
		&\le2k(n-1)+1+2k+k\delta+k^2\delta-4kn-2k^2n\\
		&=-2kn+1+k(k+1)\delta-2k^2n<0.
	\end{align*}
Hence $h(s)$ is decreasing with respect to $s\in[\delta+1,\frac{n-1}{k+1}]$. By a simple calculation, we have
	\begin{align*}
		h(s)\ge h(\frac{n-1}{k+1})&=\frac{1}{k+1}[k^2n^2-(3+5k+3k^2+(3k+5k^2+2k^3)\delta)n\\
		& \ \ \ +k^3\delta^2+2\delta^2k^2+k\delta^2+k^2\delta+2k\delta+1]\\
		& \ \ \ >\frac{1}{k+1}[k^2n^2-(3+5k+3k^2+(3k+5k^2+2k^3)\delta)n].
	\end{align*}
Recall that $n\ge\frac{3+5k}{k^2}+3+(\frac{3}{k}+5+2k)\delta$, we have $h(s)>0$ and $g(x)\ge g(n-1)=h(s)>0$, which implies $f_s(x)<f_{\delta}(x)$ for $x\in[n-1,+\infty)$. Since $\min\{\mu(G_1),\mu(G_2)\}>n-1$, we have $\mu(G_1)>\mu(G_2)$. Thus
	$$\mu(G)\ge\mu(G_1)>\mu(G_2)=\mu(K_{\delta} \vee (K_{n-k\delta-\delta-1}+(k\delta+1)K_1),$$
a contradiction. This completes the proof.
\end{proof}

\section*{Statement}

The article has been further improved. All of these contributions were provided by Ligong Wang. In recognition of his contributions, Ligong Wang is now acknowledged as a new co-author. 
We affirm that all authors have reviewed and approved this update.





\end{document}